\documentclass[12pt]{article}

\usepackage{latexsym,amsfonts,amsmath,amssymb,epsfig,tabularx,amsthm,dsfont,mathrsfs}
\usepackage{graphicx}
\usepackage{hyperref}
\usepackage{enumerate}
\usepackage{color}
\usepackage{enumitem}

\allowdisplaybreaks

\theoremstyle{definition}

\newtheorem{thm}{Theorem}[section]
\newtheorem{rem}[thm]{Remark}

\newtheorem{lem}[thm]{Lemma}
\newtheorem{prop}[thm]{Proposition}
\newtheorem{cor}[thm]{Corollary}

\newtheorem{assumption}[thm]{Assumption}

\newcommand{\esup}{\operatornamewithlimits{ess\,sup}}

\renewcommand{\d}{{\rm d}}

\newcommand{\norm}[1]{\left\Vert #1 \right\Vert}

\newcommand{\N}{\mathbb{N}}
\newcommand{\E}{\mathbb{E}}
\newcommand{\R}{\mathbb{R}}

\newcommand{\cX}{\mathcal{X}}

\DeclareMathOperator{\Id}{\text{Id}}

\title{Optimal convergence rates of MCMC integration for functions with unbounded second moment}

\author{Julian Hofstadler\thanks{{Faculty of Computer Science and Mathematics, University of Passau, Innstraße 33, 94032 Passau, Germany.\\ Email: julian.hofstadler@uni-passau.de}}} 

\date{\today}
\begin{document}
	\maketitle

	\begin{abstract}
		We study the Markov chain Monte Carlo (MCMC) estimator for numerical integration for functions that do not need to be square integrable w.r.t. the invariant distribution. 
		For chains with a spectral gap we show that the absolute mean error for $L^p$ functions, with $p \in (1,2)$, decreases like $n^{1/p -1}$, which is known to be the optimal rate. 
		This improves currently known results where an additional parameter $\delta>0$ appears and the convergence is of order $n^{(1+\delta)/p-1}$. 
	\end{abstract}	
	
	\noindent
	\textbf{Keywords:} Markov chain Monte Carlo, spectral gap, absolute mean error
	
	\noindent
	\textbf{Classification:} 65C05; 60J22; 65C20
	
	\section{Introduction}
Let $(\cX, \mathcal{F}_\cX, \pi) $ be a probability space and $f \colon \cX \to \R $ measurable as well as $\pi$-integrable.
For a random variable $X\sim \pi$ we are interested in approximating the expectation 
\[
\E[f(X)] = \int_\cX f(x) \pi(\d x) = \pi(f).
\] 
A common approach is to use a Markov chain Monte Carlo (MCMC) method.
Requiring the density of $\pi$ only in non-normalised form, many MCMC algorithms provide powerful tools in scientific and statistical applications. 
The main idea behind these approaches is to construct a Markov chain $(X_n)_{n \in \N}$, having $\pi$ as the invariant distribution, and to estimate $\pi(f)$ via
\[
S_n f = \frac{1}{n}\sum_{j=1}^n f(X_j).
\] 

Under mild conditions we have $S_n f \to \pi(f)$ almost surely as $n \to \infty$, cf.~\cite{ASMUSSEN20111482} or \cite[Chapter 17]{meyn2012markov}.
This ensures the strong consistency of the MCMC estimator, yet, it is clearly of interest to have non-asymptotic error bounds.  
For instance, given some $p \in [1,\infty)$, one may consider the $p$-mean error
\begin{equation}\label{equ:p_mean_error}
	\E \left[ \left\vert S_n f - \pi(f) \right\vert^p \right],
\end{equation}
however, also other criteria are feasible, see e.g. \cite{kunsch2019optimal}.

Setting $p=2$ in \eqref{equ:p_mean_error} we speak of the \textit{mean squared error}, and in different settings explicit bounds are known, e.g. under a Wasserstein contraction assumption \cite{Joulin2010Curvature}, spectral gap conditions \cite{rudolf2012explicit}, or if one has geometric/polynomial ergodicity \cite{latuszynski2015nonasymptotic}.
For the mean squared error to make sense we require a finite second moment of $f$, i.e. $\norm{f}_{L^2(\pi)}^2 = \pi(f^2) < \infty$. On the other hand, the \textit{absolute mean error}, given by $\E \vert S_n f - \pi (f) \vert$, is well defined and finite as long as $f$ is $\pi$-integrable. 
Bounds for the absolute mean error for functions with $\norm{f}_{L^p(\pi)}^p = \pi (\vert f \vert^p)< \infty $, where $p<2$, are still rare. 
In \cite{RUDOLF2015Error} it is shown that under a spectral gap condition for any $p \in (1,2)$ holds
\begin{equation}\label{equ:upper_bound_error}
	\sup_{\norm{f}_{L^p(\pi)}\leq 1} \E[\vert S_n f - \pi(f)\vert ] \leq \frac{C}{n^{1-\frac{1+\delta}{p}}},
\end{equation}
with constants $\delta >0$ and $C\in (0, \infty)$.

Proposition 1 of Section 2.2.9 (with $k=0$) in \cite{novak1988deterministic}, see also \cite[Section 5]{He94},  shows that in general we have the following lower bound
\begin{equation}\label{equ:lower_bound_error}
	\sup_{\norm{f}_{L^p(\pi)}\leq 1} \E[\vert S_n f - \pi(f)\vert ] \geq \frac{c}{n^{1-\frac{1}{p}}},
\end{equation} 
where $c \in (0, \infty)$ is a constant independent of $n$.

Even though $\delta>0 $ in \eqref{equ:upper_bound_error} may be chosen arbitrarily small, it is natural to ask whether it can be removed completely, such that one would have the same rate as in \eqref{equ:lower_bound_error}. 
Under the (strong) assumption of uniform ergodicity and reversibility we know that this is the case, cf. \cite[Theorem 1]{RUDOLF2015Error}. 
To the best of the author's knowledge this is the only situation where optimal rates are known. 
The goal of this note is to extend this result to the spectral gap setting, see Theorem \ref{thm:error_bound_p_mean}, where we show
\[
\sup_{\norm{f}_{L^p(\pi)}\leq 1} \E[\vert S_n f - \pi(f)\vert ] \leq \frac{\widetilde{C}_p}{n^{1-\frac{1}{p}}},
\]
for $p \in (1,2]$, with an explicit expression for the constant $\widetilde{C}_p$.

Let us sketch the proof: The main idea is to employ the Riesz-Thorin interpolation theorem, a technique which goes back at least to \cite{He94} in Monte Carlo theory and was also used to derive \eqref{equ:upper_bound_error} in \cite{RUDOLF2015Error}.  
Thereto we first derive a result for the case where $(X_n)_{n \in \N}$ is a stationary chain, see Proposition \ref{prop:L_p_stationary}.
	Then we apply a change of measure argument to deduce Theorem \ref{thm:error_bound_p_mean}.
	It is worth mentioning that based on Proposition \ref{prop:L_p_stationary} it is also possible to generalise \cite[Theorem 3.41]{rudolf2012explicit}, see Corollary \ref{cor:p_mean_error}.

The rest of this note is organised as follows: In Section \ref{sec:errorbounds} we state and discuss our assumptions as well as main results. The proofs, together with some intermediate results, can be found in Section \ref{sec:proofs}.

	\section{Error bounds for MCMC integration}\label{sec:errorbounds}
This section contains our main result, see Theorem \ref{thm:error_bound_p_mean}, together with all required notation and assumptions. 
Let us start with specifying the general setting. 

We assume that the state space $(\cX, \mathcal{F}_\cX)$ is Polish with $\mathcal{F}_\cX$ being countably generated. 
Let $K$ be a Markov kernel and $\nu$ a probability measure, called the initial distribution, both defined on $(\cX, \mathcal{F}_\cX)$. 
Then, the Markov chain corresponding to $K$ and $\nu$, say $(X_n)_{n \in \N}$,  is defined on a probability space $(\Omega, \mathcal{F}, \mathbb{P}_\nu)$.
In particular, such a probability space exists.
We assume that $\pi$ is the unique invariant distribution of $K$ and that $(X_n)_{n \in \N}$ is $\psi$-irreducible. 
For definitions and further details we refer to \cite{douc2018markov,meyn2012markov}.

{Given $p \in [1, \infty)$, we define $L^p (\pi)$ as the set of all functions $f \colon \cX \to \R$ such that $\norm{f}_{L^p(\pi)}^p = \pi(\vert f \vert^p)< \infty$. Similarly, $L^\infty(\pi)$ denotes the set of functions with finite $\norm{f}_{L^\infty(\pi)} = \esup_{x \in \cX} \vert f(x) \vert$.
	We follow the usual convention that two functions in $L^p(\pi)$, with $p \in [1, \infty]$, are considered as equal if they are equal $\pi$-almost everywhere.
	Then, $(L^p(\pi),\norm{\cdot}_{L^p(\pi)})$ is a normed space.
	Moreover, $L^2(\pi)$, equipped with $\langle f, g\rangle_{L^2(\pi)} = \int_\cX f(x) g(x) \pi(\d x)$, is a Hilbert space with induced norm $\norm{\cdot }_{L^2(\pi)}$, and so is the following closed subspace 
	\[
	L^2_0 (\pi) := \left\{  f \in L^2(\pi)  \colon \pi(f) =0 \right\}.
	\]
}
Let $p \in [1, \infty]$. A Markov kernel $K$ induces an operator $K\colon L^p(\pi)\to L^p(\pi)$ via $f \mapsto Kf(\cdot) = \int_\cX f(x') K(\cdot , \d x')$. 
Indeed, the operator $K$ is well defined, linear, and one has $\norm{K}_{L^p(\pi) \to L^p(\pi)} =1$, we refer to \cite[Section 3.1]{rudolf2012explicit} for further details.

By $\Id $ we denote the identity on $L^2(\pi)$. 
The following condition about the operator $\Id-K$, restricted to $L^2_0(\pi)$, is our main assumption. 
\begin{assumption}\label{assumption_1}
	Assume that $\Id - K$, considered as operator from  $L^2_0(\pi)$ to $L^2_0 (\pi)$, has a linear and bounded inverse with 
	\[
	\norm{(\Id - K)^{-1}}_{L^2_0(\pi) \to L^2_0(\pi)} \leq s < \infty.
	\]
\end{assumption}

\begin{rem}
	The invertibility of $\Id -K$, restricted to a suitable subspace of $L^2(\pi)$, was also studied in \cite{mathe1999numerical} for uniformly ergodic chains and \cite{mathe2004numerical} for $V$-uniformly ergodic chains.
	In particular, the existence of $(\Id - K)^{-1}$ on an appropriate subspace was used there to characterise the convergence behaviour of the mean squared error.
	Moreover, non-reversible chains on finite state spaces were studied recently in \cite{chatterjee2023spectral}. There, bounds for the mean squared error are shown, where the second smallest singular value of $\Id -K$ plays an important role.
\end{rem}

We note that Assumption \ref{assumption_1} is closely related to a spectral gap, there are, however, different definitions for a spectral gap:
\begin{itemize}
	\item Some authors, see e.g. \cite{andrieu2023explicit,douc2018markov}, say $K$ admits a(n) (absolute $L^2$) spectral gap if $\sup_{\lambda \in \mathcal{S}_0} \vert \lambda \vert <1$, where $\mathcal{S}_0$ is the spectrum of $K \colon L^2_0 (\pi) \to L^2_0 (\pi)$. 
	This is equivalent to the existence of some $m \in \N$ such that $\norm{K^m}_{L^2_0(\pi) \to L^2_0(\pi)} <1$, cf. \cite[Proposition 22.2.4]{douc2018markov}.
	\item On the other hand, a different definition, for instance used in \cite{hairer2014spectral,natarovskii2021quantitative,RUDOLF2015Error,rudolf2012explicit},  is to say that $K$ admits a(n) (absolute $L^2$) spectral gap if $\norm{K}_{L^2_0(\pi)\to L^2_0(\pi)} <1$. 
\end{itemize}

If $K$ is reversible, which implies that the corresponding Markov operator is self-adjoint on $L^2(\pi)$, then both definitions are equivalent.

Let us emphasize that either of the above definitions of a spectral gap implies that Assumption \ref{assumption_1} is true, also in the non-reversible case. 
Spectral gap results were established for a number of MCMC methods, see for instance \cite{andrieu2023explicit,hairer2014spectral,natarovskii2021quantitative},
see also \cite[Section 3.4]{rudolf2012explicit} and \cite[Theorem 2.1]{roberts1997geometric}. Moreover, we note that under Assumption \ref{assumption_1} we cover the setting of Theorems~1 and 2 of \cite{RUDOLF2015Error}. 

If for the initial distribution we have $\nu \ll \pi$ with Radon-Nikod{\'y}m derivative $\frac{\d \nu}{\d \pi}\in L^q(\pi)$ for some $q \in[1, \infty]$, then we set $M_q = \norm{\frac{\d \nu}{\d \pi}}_{L^q(\pi)}$. 
	In particular, for $q = \infty$ we have $\sup_{A \in \mathcal{F}_\cX}\frac{\nu(A)}{\pi(A)} \leq M_\infty$, in which case $\nu$ is called $M_\infty$-warm.
	
	The following theorem is our main result, which shows that under Assumption \ref{assumption_1} we have the optimal rate of convergence for the absolute mean error.

\begin{thm}\label{thm:error_bound_p_mean}
		Let Assumption \ref{assumption_1} be true, $p \in (1,2]$, and assume that $\nu$ is absolutely continuous w.r.t. $\pi$ with Radon-Nikod{\'y}m derivative $\frac{\d \nu}{\d \pi}\in L^q(\pi)$, where $p^{-1} + q^{-1} =1$. 
		Then, for any $f \in L^p(\pi)$ and any $n \in \N$ we have that
		\[
		\E_\nu \left[\left\vert S_n f - \pi(f)\right\vert  \right] \leq \frac{C_p M_q \norm{f}_{L^p(\pi)}}{n^{1-1/p} }    ,
		\] 
		where $C_p = 2^{2/p-1} \cdot (8s^2)^{1-1/p}$ and $M_q = \norm{\frac{\d \nu}{\d \pi}}_{L^q(\pi)}$.
\end{thm}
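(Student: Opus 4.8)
The plan is to prove the stationary case $\nu=\pi$ first, as recorded in Proposition \ref{prop:L_p_stationary}, and then to pass to a general warm start $\nu\ll\pi$ by a change of measure on path space. For the stationary estimate I would view $f\mapsto S_nf-\pi(f)$ as a linear operator $T_n$ from $L^p(\pi)$ into $L^p(\Omega,\mathbb{P}_\pi)$ and interpolate its norm between the endpoints $p=1$ and $p=2$ via the Riesz--Thorin theorem. The $L^1$ endpoint is immediate: by the triangle inequality and stationarity $\E_\pi\abs{S_nf}\le\norm{f}_{L^1(\pi)}$ and $\abs{\pi(f)}\le\norm{f}_{L^1(\pi)}$, so $\norm{T_n}_{L^1(\pi)\to L^1(\mathbb{P}_\pi)}\le 2$. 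The entire difficulty is concentrated in the $L^2$ endpoint, where I must show $\norm{T_n f}_{L^2(\mathbb{P}_\pi)}\le \sqrt{8}\,s\,n^{-1/2}\norm{f}_{L^2(\pi)}$; feeding $A_1=2$ and $A_2=\sqrt{8}\,s/\sqrt{n}$ into Riesz--Thorin with $\theta=2(1-1/p)$ and $1-\theta=2/p-1$ then yields precisely $\norm{T_n}_{L^p(\pi)\to L^p(\mathbb{P}_\pi)}\le C_p\,n^{-(1-1/p)}$ with $C_p=2^{2/p-1}(8s^2)^{1-1/p}$.

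For the $L^2$ endpoint the natural device is the Poisson equation. After reducing to $g:=f-\pi(f)\in L^2_0(\pi)$, Assumption \ref{assumption_1} lets me set $h:=(\Id-K)^{-1}g\in L^2_0(\pi)$, so that $g=h-Kh$ and $\norm{h}_{L^2(\pi)}\le s\norm{g}_{L^2(\pi)}$. The key observation is that, writing $M_j:=h(X_j)-Kh(X_{j-1})=h(X_j)-\E[h(X_j)\mid X_{j-1}]$, the increments $M_j$ form a martingale difference sequence, and a telescoping identity expresses $\sum_{j=1}^n g(X_j)$ as a sum of $n$ such martingale differences plus two boundary terms, each of the form $Kh$ evaluated at an endpoint of the chain. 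By orthogonality of the $M_j$ and stationarity one gets $\norm{\sum_{j}M_j}_{L^2(\mathbb{P}_\pi)}^2=n\bigl(\norm{h}_{L^2(\pi)}^2-\norm{Kh}_{L^2(\pi)}^2\bigr)\le n\,s^2\norm{g}_{L^2(\pi)}^2$, while each boundary term has $L^2(\mathbb{P}_\pi)$-norm at most $\norm{Kh}_{L^2(\pi)}\le\norm{h}_{L^2(\pi)}\le s\norm{g}_{L^2(\pi)}$; collecting these, dividing by $n$, and checking $n=1$ separately delivers the claimed $L^2$ bound for every $n\ge 1$.

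This martingale argument is, I expect, the main obstacle and deserves the most care, but it also carries the crucial advantage: it invokes only the invertibility of $\Id-K$ on $L^2_0(\pi)$ granted by Assumption \ref{assumption_1} and \emph{not} reversibility, which is exactly what makes the non-reversible spectral-gap regime accessible and lets one dispense with the parameter $\delta$ of \eqref{equ:upper_bound_error}. The points requiring attention are the $L^2$-integrability of $h$ and the validity of the martingale decomposition, together with a sufficiently sharp tracking of constants so that the factor $\sqrt{8}$ (equivalently $(8s^2)^{1-1/p}$ after interpolation) survives uniformly in $n$; here I would use that $s\ge 1/2$, which follows from $\norm{\Id-K}_{L^2_0(\pi)\to L^2_0(\pi)}\le 2$.

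Finally, to remove stationarity I would use that the law $\mathbb{P}_\nu$ of the chain started at $\nu$ is absolutely continuous with respect to $\mathbb{P}_\pi$ with density $\frac{\d\nu}{\d\pi}(X_1)$, since the transition kernel is unchanged. Hence $\E_\nu\bigl[\abs{S_nf-\pi(f)}\bigr]=\E_\pi\bigl[\abs{S_nf-\pi(f)}\,\tfrac{\d\nu}{\d\pi}(X_1)\bigr]$, and Hölder's inequality on $(\Omega,\mathbb{P}_\pi)$ with the conjugate exponents $p,q$ bounds this by $\norm{S_nf-\pi(f)}_{L^p(\mathbb{P}_\pi)}\cdot\norm{\tfrac{\d\nu}{\d\pi}(X_1)}_{L^q(\mathbb{P}_\pi)}$. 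Because $X_1\sim\pi$ under $\mathbb{P}_\pi$, the second factor equals $\norm{\tfrac{\d\nu}{\d\pi}}_{L^q(\pi)}=M_q$, while the first is controlled by Proposition \ref{prop:L_p_stationary}; multiplying the two estimates gives the asserted bound and completes the proof.
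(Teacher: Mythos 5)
Your proposal is correct, and its outer architecture (Riesz--Thorin interpolation between the endpoints $p=1$ and $p=2$, followed by the change of measure $\E_\nu[\,\cdot\,]=\E_\pi[\frac{\d\nu}{\d\pi}(X_1)\,\cdot\,]$ plus H\"older) coincides with the paper's. Where you genuinely diverge is in the $L^2$ endpoint, i.e.\ the paper's Lemma \ref{lem:MSE_stationary}: the paper expands $\E_\pi[\vert S_nh\vert^2]$ into the covariance double sum $\frac{1}{n^2}\sum_{j,k}\langle h, K^{\vert j-k\vert}h\rangle_{L^2_0(\pi)}$, collapses it via the operator identity \eqref{equ:operator_identiy} involving $(\Id-K)^{-2}$, and finishes with Cauchy--Schwarz, getting $4(n+1)s^2/n^2\le 8s^2/n$ uniformly in $n$ with no side condition on $s$. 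You instead solve the Poisson equation $h=(\Id-K)^{-1}g$ and use a martingale decomposition; this is equally valid under Assumption \ref{assumption_1} alone (no reversibility), and it arguably makes the probabilistic mechanism more transparent. Two small slips, neither fatal: the telescoping yields $n-1$ differences $M_2,\dots,M_n$ with boundary terms $h(X_1)-Kh(X_n)$ (the first is $h$, not $Kh$, at an endpoint), but both are bounded by $\norm{h}_{L^2(\pi)}\le s\norm{g}_{L^2(\pi)}$, so your estimate $s\norm{g}_{L^2(\pi)}(\sqrt{n-1}+2)/n\le \sqrt{8}\,s\,n^{-1/2}\norm{g}_{L^2(\pi)}$ holds for $n\ge 2$; and your handling of $n=1$ via $s\ge 1/2$ (which needs $L^2_0(\pi)\neq\{0\}$, the statement being trivial otherwise) is correctly justified and is a genuine extra ingredient your route requires, whereas the paper's operator identity covers $n=1$ automatically. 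The price of your approach is this slightly more delicate bookkeeping of constants; what it buys is a self-contained argument avoiding the induction behind \eqref{equ:operator_identiy}.
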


\begin{rem}
	We note that Theorem \ref{thm:error_bound_p_mean} is still true, with the same bound, if we replace $S_nf$ by $S_{n, n_0}f = \frac{1}{n}\sum_{j=1}^{n} f(X_{j+n_0})$, which corresponds to using a \textit{burn in} of length $n_0 \in \N$. 
\end{rem}

\begin{rem}
		Theorem \ref{thm:error_bound_p_mean} shows that for $p\in (1,2]$ and $q \in [1, \infty)$ with $p^{-1} + q^{-1}=1$, and $f \in L^p(\pi)$ we have
		\[
		\E_\nu \left[\left\vert S_n f - \pi(f)\right\vert  \right] \leq \frac{C_p M_q \norm{f}_{L^p(\pi)}}{n^{1-1/p} }   ,
		\]
		with $C_p, M_q$ as specified in the theorem.
		Given $c \in \R$ we set $f_c = f+c$, and note that $S_n f_c - \pi(f_c) = S_n f - \pi(f)$.
		Thus, for $\E_\nu \left[\left\vert S_n f_c - \pi(f_c)\right\vert  \right]$ we have the same bound as above, even though $\norm{\cdot}_{L^p(\pi)}$ is not invariant w.r.t. linear shifts. 
\end{rem} 

\begin{rem}
	In Theorem~\ref{thm:error_bound_p_mean} the quantity $\norm{\frac{\d \nu}{\d \pi}}_{L^q(\pi)}$ appears.
	In some sense this penalises our choice of the initial distribution $\nu$ which is allowed to differ from the target $\pi$. 
	In the setting where a fixed computational budget is available it may be worth spending some effort to find a ``good" initial distribution $\nu$. 
	However, discussing optimal choices of $\nu$ w.r.t. different theoretical and/or practical aspects is beyond the scope of this note.
\end{rem}

	\section{Proofs}\label{sec:proofs}
In this section we prove our main results. 
Recall that the chain $(X_n)_{n \in \N}$ is defined on the probability space $(\Omega, \mathcal{F}, \mathbb{P}_\nu)$.
For $p \in [1, \infty]$ let $L^p(\mathbb{P}_\nu)$ be the set of random variables $Y$ on $(\Omega, \mathcal{F}, \mathbb{P}_\nu)$ with $\norm{Y}_{L^p(\mathbb{P}_\nu)}^p =\E_\nu [\vert Y \vert^p]< \infty$. Similar as for the $L^p(\pi)$ spaces we consider $Y_1, Y_2 \in L^p(\mathbb{P}_\nu)$ as equal if $Y_1=Y_2$ holds $\mathbb{P}_\nu$-almost surely.
Then, $(L^p(\mathbb{P}_\nu), \norm{ \cdot}_{L^p(\mathbb{P}_\nu)})$ is a normed space.
Moreover, the subspace $L_0^p(\mathbb{P}_\nu)\subseteq L^p(\mathbb{P}_\nu)$ contains all $Y \in L^p(\mathbb{P})$ such that $\E_\nu [Y] = 0$.

The first result of this section provides a bound for the mean squared error of $S_n h$ for the case where $(X_n)_{n \in \N}$ is stationary, i.e. where $X_1 \sim \pi$, and $h $ is a centred function.  

\begin{lem}\label{lem:MSE_stationary}
	Let Assumption \ref{assumption_1} be true and assume that $(X_n)_{n \in \N}$ has initial distribution $\pi$, i.e. $X_1 \sim \pi$. 
	Then, for any $h \in L^2_0(\pi)$ and any $n \in \N$ we have
	\[
	\E_\pi \left[ \vert S_n h \vert^2  \right] \leq \frac{8 s^2}{n} \norm{h}_{L^2(\pi)}^2.
	\] 
\end{lem}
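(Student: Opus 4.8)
The plan is to reduce the mean squared error to a sum of \emph{uncorrelated} terms by passing through the Poisson equation associated with $h$. Since the chain is stationary and $h \in L^2_0(\pi)$, every summand satisfies $\E_\pi[h(X_j)] = \pi(h) = 0$, so $\E_\pi[S_n h] = 0$ and the quantity to bound is just the variance of $S_n h$. Expanding this variance produces the autocovariances $\langle h, K^k h\rangle_{L^2(\pi)}$, which carry no definite sign and cannot be summed naively; the role of Assumption \ref{assumption_1} will be precisely to linearise this sum.

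First I would solve the Poisson equation. By Assumption \ref{assumption_1} the function $g := (\Id - K)^{-1} h$ is a well-defined element of $L^2_0(\pi)$ with $\norm{g}_{L^2(\pi)} \leq s\, \norm{h}_{L^2(\pi)}$, and it satisfies $h = g - Kg$. Setting $D_j := g(X_j) - (Kg)(X_{j-1})$ for $j \geq 2$ and using the Markov property, $\E_\pi[D_j \mid \mathcal{F}_{j-1}] = (Kg)(X_{j-1}) - (Kg)(X_{j-1}) = 0$, so the $D_j$ form a martingale-difference sequence for the natural filtration $\mathcal{F}_j = \sigma(X_1, \dots, X_j)$. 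A short telescoping computation based on $h = g - Kg$ then yields the decomposition
\[
\sum_{j=1}^n h(X_j) = \sum_{j=2}^n D_j + g(X_1) - (Kg)(X_n),
\]
in which the first term is a sum of orthogonal increments and the remaining two are boundary terms.

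Next I would exploit orthogonality together with stationarity. Because martingale differences are uncorrelated, $\E_\pi[(\sum_{j=2}^n D_j)^2] = \sum_{j=2}^n \E_\pi[D_j^2]$, and conditioning on $X_{j-1} \sim \pi$ gives $\E_\pi[D_j^2] = \norm{g}_{L^2(\pi)}^2 - \norm{Kg}_{L^2(\pi)}^2 \leq \norm{g}_{L^2(\pi)}^2$, whence this sum is at most $(n-1)\norm{g}_{L^2(\pi)}^2$. Stationarity also controls the boundary terms, since $X_1, X_n \sim \pi$ give $\norm{g(X_1)}_{L^2(\mathbb{P}_\pi)} = \norm{g}_{L^2(\pi)}$ and $\norm{(Kg)(X_n)}_{L^2(\mathbb{P}_\pi)} = \norm{Kg}_{L^2(\pi)} \leq \norm{g}_{L^2(\pi)}$, using $\norm{K}_{L^2_0(\pi) \to L^2_0(\pi)} \leq 1$. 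Applying the triangle inequality in $L^2(\mathbb{P}_\pi)$ to the displayed identity then yields $\norm{n S_n h}_{L^2(\mathbb{P}_\pi)} \leq (\sqrt{n-1} + 2)\, \norm{g}_{L^2(\pi)}$, so that after dividing by $n^2$ and inserting $\norm{g}_{L^2(\pi)} \leq s\, \norm{h}_{L^2(\pi)}$ it remains only to verify the elementary inequality $(\sqrt{n-1}+2)^2 \leq 8n$ for every $n \in \N$, which holds because $49n^2 - 58n + 25$ has negative discriminant.

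I expect the conceptual crux to be the passage to the Poisson solution $g$: everything downstream is standard martingale-approximation bookkeeping, but the argument only works because Assumption \ref{assumption_1} guarantees that $g = (\Id - K)^{-1}h$ exists in $L^2_0(\pi)$ with norm controlled by $s$, which is exactly what produces the $s^2$ (rather than $s$) in the final constant. The only genuinely delicate points are the correct handling of the two boundary terms so as not to inflate the constant --- for which I would use Minkowski's inequality rather than a cruder $(a+b+c)^2 \le 3(a^2+b^2+c^2)$ split --- and checking that the resulting numerical factor stays below $8$ uniformly in $n$, including the edge case $n=1$, where the martingale part is empty and the bound reduces to $\norm{h}_{L^2(\pi)} = \norm{(\Id-K)g}_{L^2(\pi)} \leq 2\norm{g}_{L^2(\pi)}$.
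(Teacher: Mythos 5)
Your proof is correct, and it takes a genuinely different route from the paper's. You solve the Poisson equation $g = (\Id - K)^{-1}h$ --- this is the sole place Assumption \ref{assumption_1} enters, via $\norm{g}_{L^2(\pi)} \leq s\,\norm{h}_{L^2(\pi)}$ --- and run the classical Gordin-style martingale approximation: your telescoping identity $\sum_{j=1}^n h(X_j) = \sum_{j=2}^n D_j + g(X_1) - (Kg)(X_n)$ checks out, the $D_j$ are square-integrable martingale differences under $\mathbb{P}_\pi$, stationarity indeed gives $\E_\pi[D_j^2] = \norm{g}_{L^2(\pi)}^2 - \norm{Kg}_{L^2(\pi)}^2 \leq \norm{g}_{L^2(\pi)}^2$, and the numerical inequality $(\sqrt{n-1}+2)^2 \leq 8n$ holds for all $n \in \N$ (your discriminant computation $58^2 - 4\cdot 49\cdot 25 = -1536 < 0$ is right, and $n=1$ gives $4 \leq 8$). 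The paper instead stays entirely at the operator level: it expands $\E_\pi[\vert S_n h\vert^2]$ into the autocovariances $\frac{1}{n^2}\sum_{j,k}\langle h, K^{\vert j-k\vert}h\rangle_{L^2(\pi)}$ using \cite[Lemma 3.25]{rudolf2012explicit}, evaluates the double sum in closed form via the identity $(\Id-K)^2\sum_{j=1}^n\sum_{k=1}^n K^{\vert j-k\vert} = n(\Id-K^2) - 2K(\Id-K^n)$ on $L^2_0(\pi)$, and then applies Cauchy--Schwarz with $\norm{(\Id-K)^{-2}}_{L^2_0(\pi)\to L^2_0(\pi)} \leq s^2$ together with the bound $\norm{\bigl(n(\Id-K^2) - 2K(\Id-K^n)\bigr)h}_{L^2(\pi)} \leq 4(n+1)\norm{h}_{L^2(\pi)}$, arriving at the same constant $8s^2$; note that both arguments use the assumption \emph{quadratically}, the paper through $(\Id-K)^{-2}$ and you by squaring $\norm{g}_{L^2(\pi)}$. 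As for what each buys: the paper's computation is self-contained Hilbert-space algebra with no probabilistic machinery beyond the autocovariance formula, while your route needs the Markov property and conditional-expectation bookkeeping but is structurally richer (it is the standard gateway to maximal inequalities via Doob and to CLTs), and your intermediate bound $(\sqrt{n-1}+2)^2 s^2/n^2 \sim s^2/n$ is in fact asymptotically sharper than the paper's $4(n+1)s^2/n^2 \sim 4s^2/n$ before both are rounded up to the stated $8s^2/n$.
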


\begin{proof}
	Expanding $\E_\pi \left[ \vert S_n h \vert^2  \right] $ and using \cite[Lemma 3.25]{rudolf2012explicit} we get 
	\[
	\E_\pi \left[ \vert S_n h \vert^2  \right] 
	= \frac{1}{n^2} \sum_{j=1}^n \sum_{k=1}^n \E_\pi [h(X_j) h(X_k) ] = \frac{1}{n^2} \sum_{j=1}^{n} \sum_{k=1}^n \left\langle h, K^{\vert j-k\vert } h \right\rangle_{L^2(\pi)}.
	\]
	One can prove that on $L^2_0(\pi)$ the following identity holds 
	\begin{align}\label{equ:operator_identiy}
		(\Id-K)^2\sum_{j=1}^n \sum_{k=1}^nK^{\vert j-k\vert} 
		= n(1-K^2) - 2K(1-K^n).
	\end{align}
	Here for $n\geq 2$ one may use that $\sum_{j=1}^n \sum_{k=1}^nK^{\vert j-k\vert} = n \Id + 2\sum_{\ell=1}^{n-1} (n-\ell)K^\ell$ and the formula $\sum_{\ell=1}^{n-1} (n-\ell)K^\ell = (\Id - K)^{-2} (K^{n+1}- nK^2 +(n-1)K)$, which can be shown by induction. For $n=1$ the identity follows by plugging in. 
	
	 Note that $\left\langle h, K^{\vert j-k\vert } h \right\rangle_{L^2(\pi)} = \left\langle h, K^{\vert j-k\vert } h \right\rangle_{L^2_0(\pi)}$. 
	 Using this and \eqref{equ:operator_identiy} we get
	\[
	\E_\pi\left[  \vert S_n h\vert^2 \right] = \frac{1}{n^2} \sum_{j=1}^{n} \sum_{k=1}^n \left\langle h, K^{\vert j-k\vert } h \right\rangle_{L^2_0(\pi)}
	= \frac{1}{n^2}\left\langle h, (\Id - K)^{-2} g \right\rangle_{L^2_0(\pi)},
	\]
	where we set $g = \left(n(\Id-K^2) - 2 K(\Id -K^n) \right) h \in L^2_0(\pi)$.
	By triangle inequality, properties of operator norms, and since $\norm{K^\ell}_{L^2_0(\pi) \to L^2_0(\pi)} \leq 1$ for any $\ell \in \N$ it follows that
	\begin{align*}
		\norm{g}_{L^2_0(\pi)} &\leq n\norm{(\Id - K^2) h}_{L^2_0(\pi)} + 2 \norm{K(\Id - K^n) h}_{L^2_0(\pi)} \\
		&\leq 
		 \left( n\norm{(\Id - K^2)}_{L^2_0(\pi) \to L^2_0(\pi)}  + 2 \norm{K(\Id - K^n)}_{L^2_0(\pi) \to L^2_0(\pi)}\right)  \norm{ h}_{L^2_0(\pi)} \\
		&\leq 
		\left( 2n  + 4 \right) \norm{ h}_{L^2_0(\pi)} \\ 
		&\leq 
		4(n+1) \norm{h}_{L^2_0(\pi)}. 
	\end{align*}
	Thus, by Cauchy-Schwarz inequality we obtain 
	\begin{align*}
		\E_\pi [\vert S_n h \vert^2 ] = \frac{1}{n^2} \left\langle h, (\Id - K)^{-2} g \right\rangle_{L^2_0(\pi)} \leq \frac{s^2}{n^2} \norm{h}_{L^2_0(\pi)} \norm{g}_{L^2_0(\pi)} \\
		\leq 
		\frac{4(n+1) s^2}{n^2} \norm{h}_{L^2_0(\pi)}^2  \leq \frac{8 s^2}{n} \norm{h}_{L^2_0(\pi)}^2.
	\end{align*}
	Since $\norm{h}_{L^2_0(\pi)} = \norm{h}_{L^2(\pi)}$ the claimed bound follows.
\end{proof}

\begin{prop}\label{prop:L_p_stationary}
	Let Assumption \ref{assumption_1} be true, let $p \in [1, 2]$ and assume that $(X_n)_{n \in \N}$ has initial distribution $\pi$, i.e. $X_1 \sim \pi$. 
	Then, for any $f \in L^p(\pi)$ and any $n \in \N$ we have
	\[
	\E_\pi[\vert S_n f- \pi(f) \vert^p  ] \leq 2^{2-p} \left(\frac{8s^2}{n}\right)^{p-1} \norm{f}_{L^p(\pi)}^p.
	\]
\end{prop}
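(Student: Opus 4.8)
The plan is to derive the bound by interpolating, via the Riesz--Thorin theorem, between an elementary $L^1$ estimate and the $L^2$ estimate already furnished by Lemma~\ref{lem:MSE_stationary}. To set this up cleanly I would introduce the linear operator $T_n \colon L^p(\pi) \to L^p(\mathbb{P}_\pi)$ given by
\[
T_n f = S_n\big(f - \pi(f)\big) = \frac{1}{n}\sum_{j=1}^n \big(f(X_j) - \pi(f)\big).
\]
Since $f \mapsto f - \pi(f)$ and $f \mapsto S_n f$ are both linear, $T_n$ is linear, and because $(\cX, \mathcal{F}_\cX, \pi)$ is a probability space we have $L^2(\pi) \subseteq L^p(\pi) \subseteq L^1(\pi)$, so $T_n$ is well defined on all of $L^1(\pi)$ and satisfies $T_n f = S_n f - \pi(f)$. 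The whole point of recentring is that $h := f - \pi(f) \in L^2_0(\pi)$ whenever $f \in L^2(\pi)$, which is exactly what Lemma~\ref{lem:MSE_stationary} needs.

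For the two endpoints I would argue as follows. For the $L^2$ endpoint, apply Lemma~\ref{lem:MSE_stationary} to $h = f - \pi(f) \in L^2_0(\pi)$ and use $\norm{f - \pi(f)}_{L^2(\pi)}^2 = \norm{f}_{L^2(\pi)}^2 - \pi(f)^2 \leq \norm{f}_{L^2(\pi)}^2$ to obtain $\norm{T_n}_{L^2(\pi) \to L^2(\mathbb{P}_\pi)} \leq \sqrt{8 s^2/n}$. For the $L^1$ endpoint, the triangle inequality together with stationarity ($X_j \sim \pi$ for every $j$) gives
\[
\E_\pi\big[\abs{S_n(f - \pi(f))}\big] \leq \frac{1}{n}\sum_{j=1}^n \E_\pi\big[\abs{f(X_j) - \pi(f)}\big] = \norm{f - \pi(f)}_{L^1(\pi)} \leq 2\,\norm{f}_{L^1(\pi)},
\]
so $\norm{T_n}_{L^1(\pi) \to L^1(\mathbb{P}_\pi)} \leq 2$. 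I would then choose $\theta \in [0,1]$ with $1/p = (1-\theta)/1 + \theta/2$, that is $\theta = 2 - 2/p$ (hence $1-\theta = 2/p - 1$ and $\theta/2 = 1 - 1/p$), and invoke Riesz--Thorin to get $\norm{T_n}_{L^p(\pi) \to L^p(\mathbb{P}_\pi)} \leq 2^{\,1-\theta}\,(\sqrt{8s^2/n})^{\theta} = 2^{\,2/p-1}\,(8s^2/n)^{1-1/p}$. Raising to the power $p$ yields $\E_\pi[\abs{S_n f - \pi(f)}^p] = \norm{T_n f}_{L^p(\mathbb{P}_\pi)}^p \leq 2^{\,2-p}\,(8s^2/n)^{p-1}\,\norm{f}_{L^p(\pi)}^p$, which is exactly the asserted inequality.

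The step requiring the most care is the interpolation bookkeeping: Riesz--Thorin is applied to the \emph{single} linear operator $T_n$ mapping between the two different measure spaces $(\cX,\pi)$ and $(\Omega,\mathbb{P}_\pi)$, with the diagonal choice of exponents $p \to p$, and one must track that the decisive prefactor $2^{\,2-p}$ originates \emph{solely} from the constant $2$ in the $L^1$ endpoint norm, which in turn is produced by recentring $f - \pi(f)$ (a naive triangle-inequality reduction to centred functions at the very end would instead cost a factor $2^p$, weaker than what is claimed). The remaining technical point is the well-definedness and applicability of the theorem on the intersection $L^1(\pi) \cap L^2(\pi) = L^2(\pi)$: should a fully rigorous invocation be preferred, I would first establish the interpolated bound for $f \in L^2(\pi)$ and then extend it to all $f \in L^p(\pi)$ by continuity, using that $L^2(\pi)$ is dense in $L^p(\pi)$ for $p \in [1,2]$ and that $T_n$ is bounded on $L^p(\pi)$.
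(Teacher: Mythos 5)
Your proof is correct and takes essentially the same route as the paper's: both introduce the centred operator $T_n f = S_n f - \pi(f)$, establish the endpoint bounds $\norm{T_n}_{L^1(\pi)\to L^1(\mathbb{P}_\pi)}\leq 2$ (triangle inequality plus stationarity) and $\norm{T_n}_{L^2(\pi)\to L^2(\mathbb{P}_\pi)}\leq \sqrt{8s^2/n}$ (Lemma \ref{lem:MSE_stationary} with $\norm{f-\pi(f)}_{L^2(\pi)}\leq\norm{f}_{L^2(\pi)}$), and apply Riesz--Thorin with $\theta = 2-2/p$ before raising to the power $p$. Your closing remark on a density extension is sound but unnecessary, since $\pi$ is a probability measure and $T_n$ is already defined and bounded on all of $L^1(\pi)\supseteq L^p(\pi)$.
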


\begin{proof}
	We set $\bar{f} = f - \pi(f) \in L^2_0(\pi)$ and $T_n f= S_nf - \pi(f)$. 
	Hence by Lemma \ref{lem:MSE_stationary} and the fact that $\norm{\bar{f}}_{L^2(\pi)}^2 = \pi(f^2) - \pi(f)^2 \leq \pi(f^2) = \norm{f}_{L^2(\pi)}^2  $ we have
	\[
	\E_\pi [\vert T_n f\vert^2 ] = \E_\pi[\vert S_n \bar{f} \vert^2 ] \leq  \frac{8s^2}{n} \norm{\bar{f}}_{L^2(\pi)}^2 \leq \frac{8s^2}{n} \norm{f}_{L^2(\pi)}^2,
	\]
	which implies $\norm{T_n}_{L^2(\pi) \to L^2(\mathbb{P}_\pi)} \leq \sqrt{\frac{8s^2}{n}}$.
	Moreover, using triangle inequality we see that $\norm{T_n}_{L^1(\pi) \to L^1(\mathbb{P}_\pi)} \leq 2$.
	Thus, by the Riesz-Thorin interpolation theorem, see \cite[Theorem 1.3.4]{grafakos2014classical} in the setting $p_0 = q_0 = 1$; $p_1 = q_1 = 2$ and $\theta = 2- 2/p$, we obtain that 
	\[
	\norm{T_n}_{L^p(\pi) \to L^p(\mathbb{P}_\pi)} \leq 2^{2/p-1}\left(\frac{8s^2}{n}\right)^{1-1/p},
	\]  
	which implies the result.
\end{proof}

Using a change of measure argument and Proposition \ref{prop:L_p_stationary} we are able to prove our main result. 
However, before we turn to the proof of Theorem~\ref{thm:error_bound_p_mean} let us state another consequence of Proposition \ref{prop:L_p_stationary}.
We note that the upcoming result generalises \cite[Theorem 3.41]{rudolf2012explicit} by providing a bound for the mean squared error of $S_n f$ for $L^2(\pi)$ functions.

\begin{cor}\label{cor:p_mean_error}
	Let Assumption \ref{assumption_1} be true, let $p \in [1, 2]$ and assume that $\nu \ll \pi$ with Radon-Nikod{\'y}m derivative $\frac{\d \nu}{\d \pi}\in L^\infty(\pi)$. 
	Then, for any $f \in L^p(\pi)$ and any $n \in \N$ we have
	\[
	\E_\nu \left[ \vert S_n f - \pi(f)  \vert^p \right] \leq \frac{C_p M_\infty \norm{f}_{L^p(\pi)}^p}{n^{p-1}} ,
	\]
	where $C_p = 2^{2-p}\cdot(8s^2)^{p-1}$ and $M_\infty = \norm{\frac{\d \nu}{\d \pi}}_{L^\infty(\pi)}$.
\end{cor}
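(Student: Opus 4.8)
The plan is to reduce the general initial distribution $\nu$ to the stationary case already handled in Proposition \ref{prop:L_p_stationary}, using a change of measure on the path space, and then simply to insert the stationary bound. The whole argument should be short.

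First I would exploit that the chain evolves according to the same kernel $K$ under both $\mathbb{P}_\nu$ and $\mathbb{P}_\pi$, and that the two path measures differ only through the law of $X_1$. Concretely, for a cylinder event the finite-dimensional distributions factor as $\nu(\d x_1) K(x_1, \d x_2) \cdots$ versus $\pi(\d x_1) K(x_1, \d x_2) \cdots$, so the Radon--Nikod\'ym derivative of $\mathbb{P}_\nu$ with respect to $\mathbb{P}_\pi$ on $(\Omega, \mathcal{F})$ depends on the trajectory only through its first coordinate, namely
\[
\frac{\d \mathbb{P}_\nu}{\d \mathbb{P}_\pi} = \frac{\d \nu}{\d \pi}(X_1).
\]
Since $\frac{\d \nu}{\d \pi} \in L^\infty(\pi)$ with $\norm{\frac{\d \nu}{\d \pi}}_{L^\infty(\pi)} = M_\infty$, this derivative is bounded by $M_\infty$ both $\pi$-almost everywhere and $\mathbb{P}_\pi$-almost surely.

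Next, I would apply this to the nonnegative path functional $\Phi = \vert S_n f - \pi(f) \vert^p$, which depends only on $X_1, \dots, X_n$. The change of measure then gives
\[
\E_\nu\!\left[ \vert S_n f - \pi(f) \vert^p \right] = \E_\pi\!\left[ \frac{\d \nu}{\d \pi}(X_1)\, \vert S_n f - \pi(f) \vert^p \right] \leq M_\infty\, \E_\pi\!\left[ \vert S_n f - \pi(f) \vert^p \right].
\]
Finally, bounding the stationary right-hand side by Proposition \ref{prop:L_p_stationary} yields $M_\infty \cdot 2^{2-p} (8 s^2 / n)^{p-1} \norm{f}_{L^p(\pi)}^p$, which is exactly the claimed bound with $C_p = 2^{2-p} (8s^2)^{p-1}$ after writing $(8s^2/n)^{p-1} = (8s^2)^{p-1} / n^{p-1}$.

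The only step requiring any care is the identification of the path-space Radon--Nikod\'ym derivative, and in particular making sure the change of measure is valid for the full (infinite-horizon) path measure rather than only for finite cylinders; this is where I expect the main, though modest, obstacle to lie. It is resolved by noting that $\Phi$ is $\sigma(X_1, \dots, X_n)$-measurable, so it suffices to compare the two measures on this finite-dimensional $\sigma$-algebra, where the factorisation above is immediate and the dominating-derivative bound is elementary.
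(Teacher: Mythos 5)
Your proposal is correct and follows essentially the same route as the paper: the change of measure $\E_\nu[\cdot] = \E_\pi[\frac{\d \nu}{\d \pi}(X_1)\,\cdot\,]$, bounding the derivative by $M_\infty$, and then invoking Proposition \ref{prop:L_p_stationary}. Your additional care in justifying the path-space Radon--Nikod\'ym derivative via $\sigma(X_1,\dots,X_n)$-measurability is a sound elaboration of what the paper states more briefly.
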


\begin{proof}
	Since $\mathbb{P}_\nu$ and $\mathbb{P}_\pi$ only differ by the choice of the initial distribution of $(X_n)_{n \in \N}$ we have
	\[
	\E_\nu [\left\vert S_n f - \pi(f) \right\vert^p ] =  \E_\pi \left[\frac{\d \nu}{\d \pi} (X_1)\left\vert S_n f - \pi(f) \right\vert^p \right] 
	\leq 
	\norm{\frac{\d \nu}{\d \pi}}_{L^\infty(\pi)} \E_\pi [\left\vert S_n f - \pi(f) \right\vert^p ].
	\]
	Hence, the result follows from Proposition \ref{prop:L_p_stationary}. 
\end{proof}
Finally, we turn to the proof of Theorem \ref{thm:error_bound_p_mean}

\begin{proof}[Proof of Theorem \ref{thm:error_bound_p_mean}] 
	By the same change of measure argument as before and Hoelder's inequality
	\[
	\E_\nu [\left\vert S_n f - \pi(f) \right\vert ] =   \E_\pi \left[\frac{\d \nu}{\d \pi} (X_1)\left\vert S_n f - \pi(f)\right\vert \right] 
	\leq 
	\norm{\frac{\d \nu}{\d \pi}}_{L^q(\pi)} \E_\pi [\left\vert S_n f - \pi(f) \right\vert^p ]^{1/p}.
	\]
	Now the desired result is a consequence of Proposition \ref{prop:L_p_stationary}. 
\end{proof}
	
	\section*{Acknowledgements}
	The author would like to thank Mareike Hasenpflug and Daniel Rudolf for valuable discussions which helped to significantly improve the presentation of the paper. 
	Moreover, the author thanks two anonymous referees for reading the paper carefully, and suggesting a simplification for Section \ref{sec:proofs}.
	JH is supported by the DFG within project 432680300 -- SFB 1456 subproject B02.
	
\providecommand{\bysame}{\leavevmode\hbox to3em{\hrulefill}\thinspace}
\providecommand{\MR}{\relax\ifhmode\unskip\space\fi MR }
\providecommand{\MRhref}[2]{%
	\href{http://www.ams.org/mathscinet-getitem?mr=#1}{#2}
}
\providecommand{\href}[2]{#2}

\end{document}